\documentclass[a4paper, 12pt]{article}
%
%
%
\linespread{1} \setcounter{page}{1}


\usepackage{amssymb}
\usepackage{amssymb,amsbsy,float}
\usepackage{graphicx}
\usepackage{amsfonts}
\usepackage{latexsym}
\usepackage{bibtopic}


\newtheorem{theorem}{Theorem}

\newtheorem{proposition}[theorem]{Proposition}

%


\newenvironment{proof}[1][Proof]{\textbf{#1.} }{\ \rule{0.5em}{0.5em} \vspace{0.1 cm}}
\newcommand{\bx}{\mathbf{x}}

\newcommand{\by}{\mathbf{y}}%
\newcommand{\bp}{\mathbf{p}}%
\newcommand{\bq}{\mathbf{q}}
\newcommand{\bz}{\mathbf{z}}
\newcommand{\bh}{\mathbf{h}}
\newcommand{\bA}{\mathbf{A}}
\newcommand{\R}{\mathbb{R}}%
\newcommand{\N}{\mathbb{N}}%

\begin{document}
\title{Deterministic monotone dynamics and dominated strategies}
\author{Yannick Viossat}
\maketitle

\begin{abstract}
We survey and unify results on elimination of dominated strategies
by monotonic dynamics and prove some new results that may be seen as dual to those of Hofbauer and Weibull (J. Econ. Theory, 1996, 558-573)
on convex monotonic dynamics.\\

Key-words: evolutionary dynamics, dominated strategies
\end{abstract}
%
%

\section{Introduction}
A key issue in evolutionary game theory is whether evolutionary consideration lend support to rationality based concepts.
The most basic of these concepts is perhaps that 
strictly dominated strategies should not be played. Accordingly, whether evolutionary dynamics wipe out dominated
strategies has been studied by a number of authors (Akin, 1980; Nachbar, 1990; 
Dekel and Scotchmer, 1992; Samuelson and Zhang, 1992; Cabrales and Sobel, 1992; Hofbauer and Weibull, 1996; Cabrales, 2000; Berger and Hofbauer,
2005; Mertikopoulos and Moustakas, 2010; Hofbauer and Sandholm, 2011, and others). 
Many results concern deterministic monotonic dynamics and are proved in a very similar way. We unify these results, and prove
some new results on concave monotonic dynamics (defined in the
next section). These may be seen as a dual of Hofbauer and Weibull's (1996) results on convex monotonic dynamics.

The material is organized as follows. In the next section, we precise the framework and define several classes of
dynamics. The main results are given in section \ref{sec:res} for continuous-time dynamics and in section \ref{sec:discr} .
for discrete-time dynamics. Section \ref{sec:HW} gives examples of survival of strictly dominated strategies for single-population dynamics.  Section \ref{sec:nonmon} reviews more general dynamics and section \ref{sec:disc} discusses the concept of mixed strategy elimination.  Appendix \ref{sec:itel} deals with elimination of iteratively strictly dominated strategies. Finally, Appendices \ref{app:elim} and \ref{app:disc} provide details on some proofs. 

\section{Framework and classes of dynamics}
\label{sec:framework}

The setting we introduce is rather abstract in order to encompass
a variety of situations, including $1$-player games (decision problems), %
single population dynamics, and multipopulation dynamics where the different populations need not evolve according to the same dynamics.

We consider a large population of players from which individuals
are repeatedly and randomly drawn to play a game against some
unspecified opponent. For instance, if the underlying game is a $n$-person game, then we
focus on player $1$, which we call the focal player,  and consider players $2$ to $n$ as a single
entity, which we call the opponent (whether players $2$ to $n$ can
correlate their actions or not will not be relevant).

We assume that the set of pure strategies of
the focal player is finite and denote it by $I:=\{1,...,N\}$. The set $S_{opp}$ of strategies of the
opponent is compact. We denote by $S_N$ or $\Delta(I)$ the $N-1$ dimensional
simplex over $I$:
$$S_N=\Delta(I)=\left\{\bx \in \R_{+}^{N}, \sum_{i \in I} x_i=1\right\}$$
and by $\mbox{int}\,S_N:=\{\bx \in S_N:\forall i \in I, x_i
>0\}$ its interior. %
Let $x_i(t)$ denote the frequency of strategy $i$ at time $t$ in the focal population, and $\bx(t):=(x_i(t))_{i \in I}$ the vector of these frequencies.
Let $\by(t)$ be the opponent's strategy at time $t$.\footnote{If the game models an interaction with an individual of another population,
then $\by(t)$ is the mean strategy in this other population; if it models a symmetric interaction with an individual of the same population, then $\by(t)=\bx(t)$.}

The payoff of the pure strategy $i$ against strategy $\by$ is denoted $U_i(\by)$, where $U_i: S_{opp} \to \R$ is continuous.  The payoff of the mixed
strategy $\bp$ is $U_{\bp}(\by):=\sum_{i \in I} p_i U_{i}(\by)$. Vectors are in bold characters.\\

\textbf{Dynamics} We assume that the focal population adapts to the opponent's strategy through some biological or sociological process (imitation, selection,...). 
This is modeled by assuming that $\bx(t)$ follows a differential equation.
We focus on dynamics of the form:
\begin{equation}%
\label{eq:gendyn}%
\dot{x}_i(t)=x_i(t) \left[g_i(\bx(t),\by(t)) -\sum_{k \in I}
x_k(t) g_k(\bx(t),\by(t))\right]
\end{equation}
To ensure uniqueness of the solution through each initial condition, we assume that  the functions $g_i$ and $\by$ are locally Lipschitz. 
The simplex $S_N$ and its faces are then invariant under (\ref{eq:gendyn}).
%

This class of dynamics includes the payoff functional dynamics 
\begin{equation}
\label{eq:2popPF} %
\dot{x}_{i}=x_{i}\left[f(U_i(\by))-\bar{f}\right]
\end{equation}%
where $f: \R \to \R$ is Lipschitz continuous and $\bar{f}=\sum_{i \in I} x_i f(U_i(\by))$, with time indices suppressed.
An interpretation of (\ref{eq:2popPF}) is that $U_i(\by)$ is a material gain 
(food, territory, money,...) and $f(U_i(\by))$ its translation in terms of fitness. The replicator dynamics (Taylor and Jonker, 1978) corresponds to $f$ linear, that is when the payoffs
are directly measured in terms of fitness.\\
%

\textbf{Domination and elimination} As usual, we say that the mixed strategy $\bp$ {\em strictly dominates} the mixed strategy $\bq$ if $U_{\bp}(\by) > U_{\bq}(\by)$ for all $\by$ in $S_{opp}$. Throughout, we focus on strict domination, and write ``dominated" for ``strictly dominated". The pure strategy $i$ is {\em eliminated} under a solution $\bx(\cdot)$ of (\ref{eq:gendyn}) if $x_i(t) \to 0$ as $t \to +\infty$; it {\em survives} otherwise.
Following Samuelson and Zhang (1992) and Cabrales (2000), we say that the mixed strategy $\bq \in S_N$ is {\em eliminated} if $\min_{\{i \in I:
q_i>0\}} x_i(t) \to 0$ (or equivalently $\prod_{i \in I} x_i^{q_i}(t) \to 0$). We discuss the usefulness of this concept in section \ref{sec:disc}.\\

\textbf{Classes of dynamics} We want to find conditions on dynamics of type (\ref{eq:gendyn}) that ensure that dominated strategies are eliminated, or conversely, 
that dominated strategies survive in some games. To formulate these conditions, let us call $g_i(\bx(t),\by(t))$  the (unnormalized) \emph{growth rate} of strategy $i$ at time $t$. By analogy, we call {\em growth rate of the mixed strategy $\bp$} the quantity
%
\begin{equation} \label{eq:def-mixed-gr-rate} g_{\bp}(\bx,\by):=\sum_{i \in I} p_i g_i(\bx,\by)
\end{equation}
%
{\em Definition.} A dynamics (\ref{eq:gendyn}) is {\em aggregate monotonic} (Samuelson and
Zhang, 1992) if the growth rates of mixed strategies are ordered by their payoffs: for any mixed strategies $\bp$ and $\bq$ in $S_N$,
\begin{equation}
\label{eq:defAggMon}
\forall (\bx, \by) \in S_N \times S_{opp}, U_{\bp}(\by) > U_{\bq}(\by) \Rightarrow g_{\bp}(\bx, \by) >
g_{\bq}(\bx,\by) 
\end{equation}
It is \emph{monotonic} if (\ref{eq:defAggMon}) holds whenever $\bp$ and $\bq$ are pure strategies, and \emph{convex monotonic} (Hofbauer and Weibull, 1996) if (\ref{eq:defAggMon}) holds whenever $\bq$ is pure. It is \emph{concave monotonic} if (\ref{eq:defAggMon}) holds whenever $\bp$ is pure. Examples of convex and concave monotonic dynamics are discussed by Hofbauer and Weibull (1996).\footnote{Monotonicity (Samuelson and Zhang, 1992) is called {\em relative monotonicity} by Nachbar (1990), {\em order-compatibility 
} by Friedman (1991), 
and {\em payoff monotonicity} by Hofbauer and Weibull (1996). The definition of concave monotonic dynamics is new but 
related dynamics are studied by Bj\"ornerstedt (1995).}
%

To understand these names, note that a payoff functional dynamics (\ref{eq:2popPF}) with $f$ increasing is convex (resp. concave) monotonic
if and only if $f$ is convex (resp. concave). It is aggregate monotonic if and only if $f$ is linear, in which case (\ref{eq:2popPF}) is the replicator dynamics.\footnote{Samuelson and Zhang (1992
) have shown that in bimatrix games, for any
aggregate monotonic dynamics, there exists a positive speed
function $\lambda$ such that
$\dot{\bx}(t)=\lambda(\bx(t),\by(t))\dot{\bx}_{REP}(t)$, where
$\dot{\bx}_{REP}=x_i(U_{i}(\by)-U_{\bx}(\by))$. For single-population
dynamics ($\by(t)=\bx(t)$ $\forall t$), this implies that any
aggregate monotonic dynamics has the same orbits than the
replicator dynamics; for multi-population dynamics, this need not
be so, because the speed function is population specific.} %
\section{Intuition and main results}\label{sec:res}

%

\textbf{Intuition.} Convex monotonic dynamics favour mixed
strategies over pure ones in that a payoff advantage of a mixed strategy over a pure one always translates in a higher growth rate; thus they should eliminate dominated pure strategies, but not necessarily dominated mixed strategies. 
Similarly, concave monotonic dynamics favour pure strategies, 
thus should eliminate mixed strategies dominated by pure ones. 

Aggregate monotonic dynamics are a limit case: a better payoff always translates in a higher growth rate, hence all dominated strategies are eliminated. The same holds for monotonic dynamics when restricting attention to pure strategies. Formally:
\begin{proposition}
\label{prop:elim} Let $\bp$ and $\bq$ be two mixed
strategies. If $\bp$ strictly dominates $\bq$ then, under any aggregate monotonic dynamics and for any interior initial condition,  
$\bq$ is eliminated.

The same result holds for monotonic dynamics when $\bp$ and $\bq$ are both pure, for convex monotonic dynamics when $\bq$ is pure, and for concave monotonic dynamics when $\bp$ is pure.\footnote{For multipopulation dynamics, these results are due to Akin (1980) for the replicator dynamics, Nachbar (1990) for monotonic dynamics,  Samuelson and Zhang (1992) for aggregate monotonic dynamics and Hofbauer and Weibull (1996) for convex monotonic dynamics. The result on concave monotonic dynamics seems to be new.}
\end{proposition}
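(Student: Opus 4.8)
The plan is to exhibit a single Lyapunov-type quantity that settles all four assertions at once. Fix an interior initial condition; by invariance of $\mathrm{int}\,S_N$ (and of the faces of $S_N$) under (\ref{eq:gendyn}), the solution exists for all $t \ge 0$ and satisfies $x_i(t) > 0$ for every $i \in I$, so I may define
$$v(t) := \sum_{i \in I}(q_i - p_i)\,\ln x_i(t) = \ln\Big(\prod_{i\in I} x_i^{q_i}(t)\Big) - \ln\Big(\prod_{i\in I} x_i^{p_i}(t)\Big).$$
The key structural fact I will use repeatedly is $\sum_{i\in I}(q_i - p_i) = 0$.

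First I would differentiate $v$ along the solution. Equation (\ref{eq:gendyn}) gives $\dot x_i/x_i = g_i(\bx,\by) - \sum_{k \in I} x_k g_k(\bx,\by)$; multiplying by $q_i - p_i$, summing over $i$, and using $\sum_{i \in I}(q_i-p_i)=0$ to kill the common normalizing term, I obtain
$$\dot v(t) = \sum_{i\in I}(q_i - p_i)\,g_i(\bx(t),\by(t)) = g_{\bq}(\bx(t),\by(t)) - g_{\bp}(\bx(t),\by(t)),$$
where $g_{\bp}, g_{\bq}$ are the mixed growth rates of (\ref{eq:def-mixed-gr-rate}).

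The core step is to bound this derivative away from $0$. Since $\bp$ strictly dominates $\bq$, $U_{\bp}(\by) > U_{\bq}(\by)$ for every $\by \in S_{opp}$. Now I invoke the appropriate instance of (\ref{eq:defAggMon}): for aggregate monotonic dynamics it holds for arbitrary $\bp,\bq$; for monotonic dynamics when $\bp,\bq$ are both pure; for convex monotonic dynamics when $\bq$ is pure; for concave monotonic dynamics when $\bp$ is pure — exactly the four cases claimed. Hence $g_{\bp}(\bx,\by) > g_{\bq}(\bx,\by)$ on all of $S_N \times S_{opp}$. Since the $g_i$ are (locally Lipschitz, hence) continuous, the map $(\bx,\by)\mapsto g_{\bp}(\bx,\by) - g_{\bq}(\bx,\by)$ is continuous and strictly positive on the compact set $S_N \times S_{opp}$, so it attains a minimum $\delta > 0$. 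Therefore $\dot v(t) \le -\delta$ for all $t \ge 0$, which forces $v(t) \le v(0) - \delta t \to -\infty$.

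To conclude, note that $x_i(t) \in (0,1]$ and $p_i \ge 0$ imply $\prod_{i\in I} x_i^{p_i}(t) \le 1$, hence $\ln \prod_{i\in I} x_i^{p_i}(t) \le 0$, so
$$\ln\Big(\prod_{i\in I} x_i^{q_i}(t)\Big) = v(t) + \ln\Big(\prod_{i\in I} x_i^{p_i}(t)\Big) \le v(t) \to -\infty,$$
i.e. $\prod_{i\in I} x_i^{q_i}(t) \to 0$, which is precisely the elimination of $\bq$ (reducing to $x_j(t)\to 0$ when $\bq = \be_j$ is pure). I expect the only genuine obstacle to be the uniform positive lower bound $\delta$: this is where compactness of $S_N \times S_{opp}$ and continuity of the growth rates are indispensable, since $\dot v < 0$ pointwise would not by itself give $v \to -\infty$. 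Everything else is bookkeeping — chiefly checking that each of the four monotonicity classes covers precisely the pairs $(\bp,\bq)$ to which it is matched, and that the interior initial condition keeps $\ln x_i(t)$ finite for all $t$.
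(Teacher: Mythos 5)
Your proposal is correct and is essentially identical to the paper's own proof: the paper uses $w(t)=\sum_{i}(p_i-q_i)\ln x_i(t)$ (the negative of your $v$), derives $\dot w = g_{\bp}-g_{\bq}\ge\varepsilon$ by the same compactness argument, and concludes $\sum_i q_i \ln x_i(t)\to-\infty$ from $\sum_i p_i\ln x_i(t)\le 0$ exactly as you do. The handling of the four monotonicity classes by substituting pure strategies for $\bp$, $\bq$, or both is also the same.
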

For generalizations to elimination of iteratively dominated strategies, see Appendix \ref{sec:itel}. 
Before providing a formal proof, we note that these results are sharp 
in the class of payoff functional dynamics (\ref{eq:2popPF}): 

\begin{proposition}
\label{prop:surv} Consider a payoff functional dynamics (\ref{eq:2popPF}). If $f$ is not convex,  then (\ref{eq:2popPF}) need not eliminate strictly dominated pure strategies: there exists a game with a strictly dominated pure strategy that survives for some interior initial conditions and some opponent's behaviors. 

Similarly, if $f$ is not concave, then (\ref{eq:2popPF}) need not eliminate mixed strategies dominated by a pure strategy;
if $f$ is not linear (resp. not increasing), then (\ref{eq:2popPF}) need not eliminate mixed strategies dominated by another mixed strategy; (resp. pure by pure).
\end{proposition}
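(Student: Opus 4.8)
The plan is to construct, for each of the four clauses, an explicit payoff functional dynamics (\ref{eq:2popPF}) together with a Lipschitz time-path $\by(\cdot)$ for the opponent (admissible here, since the framework only requires $\by$ to be locally Lipschitz), in which the stated domination holds but the stated dominated object is not eliminated. The engine is the same in every case: along a trajectory the effective per-capita growth of pure strategy $i$ is $f(U_i(\by(t)))$, so when $\by(\cdot)$ oscillates it is the time-average of $f(U_i(\by(\cdot)))$ that decides survival, and since $f$ is not affine, Jensen's inequality makes this time-average clash with the ranking of the time-averages of the raw payoffs. Strict domination ``for all $\by \in S_{opp}$'' will be built in by making the relevant mixed-strategy payoffs constant in $\by$, which is easy to realise in a bimatrix game with a two-action opponent.

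The clause ``$f$ not increasing'' needs no oscillation: pick $a<b$ with $f(a)>f(b)$, take $N=2$ with $U_1\equiv a$, $U_2\equiv b$ (a one-action opponent, or a $2\times 2$ game with identical columns). Strategy $1$ is strictly dominated by strategy $2$, while (\ref{eq:2popPF}) reduces to $\dot x_1=x_1x_2[f(a)-f(b)]>0$ on the interior, so $x_1(t)\to 1$: the dominated pure strategy even takes over.

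For the remaining clauses, use continuity and the classical fact that a continuous midpoint-convex function is convex to fix reals $a<c$ such that, with $m:=(a+c)/2$, either $f(m)>\frac{f(a)+f(c)}{2}$ (available when $f$ is not convex) or $f(m)<\frac{f(a)+f(c)}{2}$ (available when $f$ is not concave); when $f$ is merely not linear, at least one holds. The building block is an \emph{oscillating pair} of pure strategies $p,q$ with $U_p(\by)=m+\psi(\by)$, $U_q(\by)=m-\psi(\by)$ for a Lipschitz $\psi:S_{opp}\to[\frac{a-c}{2},\frac{c-a}{2}]$, together with a periodic path $\by(\cdot)$ of small period $T$ (with negligible smoothing intervals) along which $\psi(\by(t))=\frac{c-a}{2}$ on $[0,T/2)$ and $\frac{a-c}{2}$ on $[T/2,T)$. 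This pair has mixed-strategy payoff $\frac{1}{2}U_p+\frac{1}{2}U_q\equiv m$, while each of $U_p(\by(\cdot))$, $U_q(\by(\cdot))$ spends half of each period at $a$ and half at $c$, hence has effective rate $\frac{f(a)+f(c)}{2}$; a strategy with constant payoff $m\pm\epsilon$ has effective rate $f(m\pm\epsilon)\approx f(m)$. So: when $f$ is not convex, oscillation hurts ($f(m)>\frac{f(a)+f(c)}{2}$), a pure strategy $r$ with $U_r\equiv m-\epsilon$ is strictly dominated by the equal mixture of $p,q$ yet out-grows it and survives (clause 1), and replacing $r$ by a pair $\{r,s\}$ with $U_r\equiv U_s\equiv m-\epsilon$ gives a mixed strategy strictly dominated by a mixed strategy that survives. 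When $f$ is not concave, oscillation helps, so the equal mixture $\bq$ of $p,q$ (payoff $m$) is strictly dominated by a pure strategy $r$ with $U_r\equiv m+\epsilon$ yet out-grows it and $\bq$ is not eliminated (clause 2), and replacing $r$ by a pair with $U_r\equiv U_s\equiv m+\epsilon$ again yields mixed-dominated-by-mixed survival. These last two together cover clause 3. (In clause 1 the dominating object must be a genuine mixture, consistent with the fact that (\ref{eq:2popPF}) with $f$ increasing is monotonic, hence eliminates pure-by-pure dominated strategies; one checks $r$ is dominated by no pure strategy, since $U_p$ and $U_q$ dip below $m-\epsilon$.)

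It remains to upgrade ``the averaged rates go the wrong way'' into non-elimination for all times. Two facts do it. First, on the oscillating pair $\frac{d}{dt}\log(x_p/x_q)=f(U_p(\by(t)))-f(U_q(\by(t)))$ has zero integral over each period, so $x_p/x_q$ stays in a fixed compact subset of $(0,\infty)$ (and $x_r/x_s$ is constant), whence ``a mixed strategy is eliminated'' is equivalent to ``the total frequency of its support tends to $0$''. Second, the face of $S_N$ on which the surviving support vanishes is repelling: near it the per-period log-increment of that total frequency is, up to lower-order errors, the effective rate of the surviving support minus that of the remaining strategies, namely $\pm(\frac{f(a)+f(c)}{2}-f(m\pm\epsilon))$, which is $>0$ with the favourable sign; the errors are $O(\delta)$ (smoothing), $O(\eta)$ (distance $\eta$ to the face) and $O(T^2)$ (within-population fractions drift only $O(T)$ per period, so the correlation of $x_p$ with $f(U_p)-f(U_q)$ is negligible — alternatively this correlation vanishes exactly by the reflection $t\mapsto T-t$, which flips $\psi(\by(\cdot))$, when the pair starts with equal frequencies). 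Fixing $\epsilon$ small and then $\delta,T$ small makes this increment at least a fixed $\rho T/2>0$ whenever the relevant frequency is small, which forbids convergence to the face and gives survival for every interior initial condition. I expect this uniform-in-time averaging bound to be the main obstacle: finite-horizon averaging is routine, but excluding convergence to the face for all times needs the one-period estimate; checking well-posedness ($g_i=f\circ U_i$ and $\by(\cdot)$ Lipschitz) and the realisation by a genuine finite game is bookkeeping.
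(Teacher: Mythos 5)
Your construction is correct and is essentially the paper's: the same $3\times 2$ game built from an oscillating pair of rows $(a,c)$/$(c,a)$ plus a constant row at the midpoint shifted by $\pm\varepsilon$, the same periodic two-column opponent path, and the same use of midpoint (non-)convexity of the continuous $f$ to make the time-averaged growth rates contradict the payoff ranking; the reduction of the non-linear clause to the non-convex/non-concave clauses and the trivial non-increasing clause also match. Where you genuinely diverge is in the closing step, which you correctly identify as the main obstacle: you take a \emph{short} period and control the log-increment of the total support mass directly, which forces you to argue that the relevant face is repelling and to bound smoothing, distance-to-face, and correlation errors of order $O(\delta)$, $O(\eta)$, $O(T^2)$. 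The paper instead takes a \emph{long} period and tracks $w(t)=\sum_i(p_i-q_i)\ln x_i(t)$ (concretely $\ln\bigl(x_M/\sqrt{x_Tx_B}\bigr)$): because $\sum_i(p_i-q_i)=0$, the mean-fitness term $\bar f$ cancels and $\dot w=f(U_M(\by))-\tfrac12\bigl[f(U_T(\by))+f(U_B(\by))\bigr]$ depends on $\by(t)$ only, so $w\to-\infty$ follows by integrating over periods with no reference to $\bx$, no neighbourhood of a face, and no correlation estimate; non-elimination then follows from periodicity of $x_T/x_B$ exactly as in your first observation. Both routes work, but the paper's choice of Lyapunov-type function dissolves the uniform-in-time averaging difficulty you flag, at the cost of nothing; you may want to adopt it. (Two minor remarks: your insistence on genuine mixtures in the non-linear clause is unnecessary, since the paper treats pure strategies as degenerate mixed ones, and in the non-increasing clause the case $f(a)=f(b)$ still gives survival, just not takeover.)
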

The result on convex monotonic dynamics is due to Hofbauer and Weibull (1996). The others seem to be new.\\

\noindent \begin{proof}[Proof of proposition \ref{prop:elim}]
First consider aggregate monotonic dynamics:  let $\bp$ and $\bq$ be mixed strategies such that $\bp$ strictly dominates $\bq$. Since the dynamics is aggregate monotonic, it follows that $g_{\bp}(\bx, \by) > g_{\bq}(\bx, \by)$ for all $(\bx, \by) \in S_N \times S_{opp}$. By compactness of  $S_N \times S_{opp}$, there exists $\varepsilon>0$ such that
\begin{equation} \label{eq:proofagg}
g_{\bp}(\bx, \by)  - g_{\bq}(\bx, \by) \geq \varepsilon \quad \forall (\bx, \by) \in S_N \times S_{opp}
\end{equation}
Fix an interior solution $\bx(\cdot)$ and let $w(t):=\sum_{i \in I} (p_i -q_i) \ln x_i(t)$. By (\ref{eq:proofagg}), at any point in time, %
\begin{equation} \label{eq:dotw}
\dot{w}=\sum_{i \in I} (p_i - q_i) \frac{\dot{x}_i}{x_i}=\sum_{i \in I} (p_i -q_i)g_i(\bx,\by)=g_{\bp}(\bx,\by)-g_{\bq}(\bx,\by) \geq \varepsilon
\end{equation}
Therefore, $\lim_
{t \to + \infty} w(t)=+ \infty$. Since $\sum_{i \in I} p_i \ln x_i(t)  \leq 0$ (as $x_i \leq 1$), this implies that  $\sum_{i \in I} q_i \ln x_i(t) \to -\infty$. Therefore $\prod_{i \in I} x_i(t)^{q_i} \to 0$ and $\bq$ is eliminated.

The proof for the other classes of dynamics is the same, up to replacement of $\bp$, $\bq$, or both, by pure strategies.
\end{proof}

\noindent \begin{proof}[Proof of proposition \ref{prop:surv}]
The case $f$ nonincreasing is obvious. The case $f$ nonlinear follows from the cases $f$ nonconvex and $f$ nonconcave, which we now deal with. 
%
%
Assume first that $f$ is not convex. Since $f$ is continuous (as assumed throughout), there exist real numbers $a$, $b$, and $\varepsilon>0$ such that
\begin{equation}
\label{eq:nonconcave}
\frac{f(a) + f(b)}{2} < f\left(\frac{a+b}{2} - \varepsilon \right) %
\end{equation}%
Consider the $3 \times 2$ game with payoff matrix:
\begin{equation}
\label{game:surv}
\begin{array}{cc}
  & \begin{array}{cc}
    L \hspace{0.5 cm} & \hspace{0.5 cm} R \\
    \end{array} \\
\begin{array}{c}
 T \\
 M \\
 B \\
\end{array}
& \left(\begin{array}{cc}
           a            &           b            \\
 \frac{a+b}{2} -\varepsilon & \frac{a+b}{2}-\varepsilon \\
           b            &           a            \\
\end{array}\right)\\
\end{array}
\end{equation}
The pure strategy $M$ is strictly dominated by $\bq=(1/2, 0, 1/2)$. But if, approximately, $\by(t)=L$
half of the time and $\by(t)=R$ the other half, then the average (unnormalized) growth rate of strategies $T$ and $B$
is close to $[f(a)+f(b)]/2$, while the average growth rate of strategy $M$ is
$f([a+b]/2-\varepsilon) > [f(a)+f(b)]/2$.
It follows that $x_M \to 1$, hence strategy $M$ survives.

If $f$ is not concave, then take $a$, $b$, and $\varepsilon>0$ such that $\frac{f(a) + f(b)}{2} > f\left(\frac{a+b}{2} + \varepsilon \right)$.
Consider the same game as  (\ref{game:surv}), but with $-\varepsilon$ changed into $+\varepsilon$. Strategy $\bq$ is strictly dominated by $M$.
But against the same opponent's behaviour, the average growth rate of strategy $\bq$ is now higher than the average growth rate of $M$.
It follows that $x_M \to 0$. Choosing $y(t)$ periodic and respecting the symmetry
between strategies $T$ and $B$ ensures that $x_T/x_B$ remains bounded. Together with $x_M \to 0$, this implies that $\liminf_{t \to + \infty} x_T (t) x_B (t) >0$,
hence $\bq$ is not eliminated. For details see Appendix \ref{app:elim}.
\end{proof}

As detailed in Appendix C, a variant of this proof shows that proposition \ref{prop:surv} extends to the wider class of payoff functional dynamics considered by Hofbauer and Weibull (1996).

\section{Discrete-time dynamics}
\label{sec:discr} %
Dekel and Scotchmer (1992) show that the discrete-time replicator dynamics need not eliminate pure strategies
strictly dominated by mixed strategies. Thus, proposition \ref{prop:elim} does not extend to
discrete-time dynamics. To see why, consider the
discrete-time dynamics:
\begin{equation}
\label{eq:discrete-gen} x_i(n+1)=x_i(n) \frac{C+
g_i(\bx,\by)}{C + \sum_{k} x_k g_k(\bx,\by)}
\end{equation}
where $\bx=\bx(n)$, $\by=\by(n)$, and the constant $C > - \min_{i,\mathbf{x},\mathbf{y}} g_i(\bx, \by)$ may be interpreted as background fitness (Maynard-Smith, 1982).
Eq. (\ref{eq:discrete-gen}) is equivalent to
\begin{equation}
\label{eq:discrete-gen-bis} x_i(n+1) - x_i(n)=x_i(n)
\frac{g_i(\bx,\by)- \sum_{k} x_k g_k(\bx,\by)}{C + \sum_{k} x_k
g_k(\bx,\by)}
\end{equation}
which is the Euler discretization of (\ref{eq:gendyn}) with step size $1/[C + \sum_{k} x_k g_k(\bx,\by)]$. The discrete-time replicator dynamics
(Maynard-Smith, 1982) corresponds to $g_i(\bx,\by)=U_{i}(\by)$. Dekel and Scotchmer (1992) take $C=0$.

Proposition \ref{prop:elim} and \ref{prop:surv} are based on the evolution of the quantity $w:=\sum_{i \in I} (q_i -p_i) \ln
x_i$, where $\bp$ and $\bq$ are two mixed strategies. In continuous-time, we had (\ref{eq:dotw}): 
$\dot{w}=\sum_{i \in I} (q_i - p_i)g_i(\bx,\by)$. But it follows from (\ref{eq:discrete-gen}) that in discrete-time, 
\begin{equation}
\label{eq:discrete-dotw} w(n+1) - w(n)=\sum_{i \in I} (q_i -
p_i) \tilde{g}_i(\bx,\by) \mbox{ with } \tilde{g}_i=\ln\left(C + g_i\right).
\end{equation}
It follows that propositions \ref{prop:elim} and \ref{prop:surv} hold for (\ref{eq:discrete-gen}) if we replace $g_i$ by $\tilde{g}_i$ when defining our classes of dynamics. This was first understood, and shown for aggregate monotonic dynamics, by Cabrales and Sobel (1992).

For payoff functional dynamics 
\begin{equation}
\label{eq:discPF}
x_i(n+1)=x_i(n) \frac{C+ f(U_{i}(\by))}{C + \sum_{k} x_k f(U(k,\by))}
\end{equation}
we get:
\begin{proposition}
\label{prop:discrete-conv-neg}
The dynamics (\ref{eq:discPF}) eliminates pure (mixed) strategies strictly dominated by 
mixed (pure) strategies if and only if $\ln(C + f)$ is increasing and convex (concave). 
It eliminates mixed strategies strictly dominated by mixed strategies if and only if $\ln(C + f)$ 
is increasing and linear.  
\end{proposition}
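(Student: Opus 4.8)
The plan is to obtain Proposition \ref{prop:discrete-conv-neg} as a corollary of Propositions \ref{prop:elim} and \ref{prop:surv} through the substitution principle recorded around (\ref{eq:discrete-dotw}). Recall from that discussion that along any solution of (\ref{eq:discrete-gen}) the quantity $w(n)=\sum_{i\in I}(q_i-p_i)\ln x_i(n)$ satisfies $w(n+1)-w(n)=\sum_{i\in I}(q_i-p_i)\tilde g_i(\bx,\by)$ with $\tilde g_i=\ln(C+g_i)$, and that, as a result, Propositions \ref{prop:elim} and \ref{prop:surv} hold verbatim for (\ref{eq:discrete-gen}) once the classes of dynamics (monotonic, convex/concave/aggregate monotonic) are defined using the modified growth rates $\tilde g_i$ instead of $g_i$. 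So the task reduces to identifying, for the payoff functional dynamics (\ref{eq:discPF}), what these modified classes amount to in terms of $f$, and then reading off Propositions \ref{prop:elim} and \ref{prop:surv}.

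For (\ref{eq:discPF}) we have $g_i=f(U_i(\by))$, hence $\tilde g_i=\ln\big(C+f(U_i(\by))\big)=\tilde f(U_i(\by))$ with $\tilde f:=\ln(C+f)$; the hypothesis $C>-\min_{i,\bx,\by}g_i(\bx,\by)$ makes $C+f$ positive and bounded away from $0$ on the range actually visited, so $\tilde f$ is well defined and continuous there. Thus, with respect to the modified growth rates, (\ref{eq:discPF}) is nothing but a payoff functional dynamics with translation function $\tilde f$. Now recall the characterization from Section \ref{sec:framework}: a payoff functional dynamics can be monotonic only if its translation function is increasing, and, when it is increasing, it is convex (resp. concave, resp. aggregate) monotonic if and only if that function is convex (resp. concave, resp. linear). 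Applying this to $\tilde f$ gives: (\ref{eq:discPF}) is convex (resp. concave, resp. aggregate) monotonic in the modified sense if and only if $\ln(C+f)$ is increasing and convex (resp. concave, resp. linear).

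The ``if'' half is then immediate from the discrete-time form of Proposition \ref{prop:elim}: if $\ln(C+f)$ is increasing and convex, (\ref{eq:discPF}) is convex monotonic in the modified sense, hence eliminates pure strategies strictly dominated by mixed ones; the concave and linear cases give in the same way elimination of mixed strategies dominated by pure ones, and of mixed strategies dominated by mixed ones. For the ``only if'' half I would argue by contraposition using the discrete-time form of Proposition \ref{prop:surv} applied to $\tilde f$. If $\ln(C+f)$ is not convex, there is a game in which (\ref{eq:discPF}) fails to eliminate some pure strategy dominated by a mixed one; and if $\ln(C+f)$ is not increasing, there is a game in which it fails to eliminate some pure strategy dominated by another pure strategy, which is a fortiori a pure strategy dominated by a mixed one. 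Hence elimination of pure strategies dominated by mixed ones in all games forces $\ln(C+f)$ to be increasing and convex. The two other equivalences follow identically: for ``mixed dominated by pure'' one invokes the non-concave and non-increasing cases of Proposition \ref{prop:surv} (using that a pure strategy is a mixed strategy, so a pure-by-pure counterexample is in particular a mixed-by-pure one), and for ``mixed dominated by mixed'' the non-linear and non-increasing cases.

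The single point that requires more than bookkeeping is checking that Proposition \ref{prop:surv} genuinely transfers to the discrete-time dynamics (\ref{eq:discPF}) under the substitution $g_i\mapsto\tilde g_i$: that the growth-rate inequalities driving the proof of Proposition \ref{prop:surv} in the game (\ref{game:surv}) become the corresponding inequalities for $w(n+1)-w(n)$, so that the same (now discrete-time, eventually periodic) opponent behaviour still produces survival of the dominated strategy. This is exactly the content of the remark following (\ref{eq:discrete-dotw}), together with the fact noted above that $\tilde f$ is a bona fide continuous translation function on the relevant range; so no essentially new estimate is needed beyond those already used in Propositions \ref{prop:elim} and \ref{prop:surv}, only care over which of the four domination patterns (pure or mixed dominated by pure or mixed) is being treated.
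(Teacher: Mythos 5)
Your proposal is correct and follows essentially the same route as the paper: the paper states Proposition \ref{prop:discrete-conv-neg} as an immediate consequence of the identity (\ref{eq:discrete-dotw}), which shows that the discrete dynamics is the continuous one with $g_i$ replaced by $\tilde g_i=\ln(C+g_i)$, so that for (\ref{eq:discPF}) the relevant translation function becomes $\ln(C+f)$ and Propositions \ref{prop:elim} and \ref{prop:surv} can be read off directly. Your additional care over the ``only if'' direction (invoking the non-convex, non-concave, non-linear and non-increasing cases of Proposition \ref{prop:surv}, and using that pure strategies are mixed strategies to cover the monotonicity failure) matches what the paper leaves implicit.
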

Since the logarithm is concave, dynamics that are aggregate monotonic in continuous-time become concave monotonic in discrete-time, and thus do not eliminate dominated pure strategies. Dekel and Scotchmer's (1922) example is an instance of this general result.

For large $C$ (a fine discretization), $\tilde{g}_i \simeq \ln C + g_i/C$ and discretizing concavifies less the dynamics.
If we first fix the game and the functions $g_i$ and then take $C$ large enough, or if we let $C$ depend on $n$, with $C_n \to +\infty$ (and $\sum_{n} 1/C_n = + \infty$ for the dynamics not to stop),
then proposition \ref{prop:elim} holds also in the discrete case, 
without modifying the definitions of the classes of dynamics (see Appendix \ref{app:disc} and Cabrales and Sobel, 1992). For more on discrete time dynamics, see Cabrales and Sobel (1992) and Bj\"ornerstedt et al. (1996).\footnote{ Bj\"ornerstedt et al. (1996) consider an overlapping-generations dynamics which has as limit cases the discrete-time and the continuous-time replicator dynamics, for respectively no overlap and full overlap. The degree of overlap plays a role similar to that of the background fitness $C$ in (\ref{eq:discrete-gen} ). They show that, for a fixed game, if the degree of overlap is large enough, strictly dominated strategies are eliminated.}

\section{Survival of strictly dominated strategies in single-population dynamics}
\label{sec:HW}

Proposition \ref{prop:surv} is easy to prove because the opponent's behaviour may be chosen ad-hoc.
This section shows that similar results may be obtained with more realistic opponent's behaviours.
We first recall a result of Hofbauer and Weibull (1996).

\subsection{Hofbauer and Weibull's result}
\label{subsec:HW}
Consider a single population dynamics with a payoff functional form (\ref{eq:2popPF}), with $f$ $C^1$.
Up to a change of time, and suppressing time indices, it may be written as  $\dot{x}_i=x_i [f(U_i) - \bar{f}]$
with $U_i=U_{\bx}(\bx)$ and $\bar{f}= \sum_i x_i f(U_i)$. Hofbauer and Weibull (1996) show that if $f$ is not convex,
then there are games where pure strategies strictly dominated by mixed strategies survive, for many interior initial conditions,  The game they consider is a $4 \times 4$ two-player symmetric game.
It is built by adding a fourth strategy to a generalized Rock-Paper-Scissors (RPS) game.
The payoff matrix has the following structure:
\[
\left(
\begin{array}{cccc}
a  &  c & b  &  \gamma \\
b  & a  & c  & \gamma \\
c  & b  & a & \gamma \\
a + \beta & a + \beta & a + \beta & 0
\end{array}
\right)
\]
where: (i) $c < a < b$, (ii) $(a + b + c)/3 > a + \beta$ and $\gamma > 0$ , and (iii) $\beta > 0$.

By (i), the $3 \times 3$ base game is a RPS game, and by (ii), strategy $4$ is strictly dominated by $\bp=(1/3, 1/3, 1/3, 0)$.
Since $\beta$ and $\gamma$ are positive, there is a rest point $\bz_i$ in the relative interior of the edge connecting strategy
$i \in \{1,2,3\}$ to strategy 4. It attracts any solution starting in the relative interior of this edge,
and actually, any solution starting in the relative interior of the face $x_{i+1}=0$, where $i+1$ is counted modulo $3$. It follows that the $\bz_i$ form a heteroclinic cycle, that is, a loop of saddle rest points and orbits connecting them. 
Conditions (ii) and (iii) imply that $(a + b + c)/3 > a$ (or equivalently, $a < (b+c)/2$). This implies that, under the replicator dynamics,
the base game in an inward cycling RPS game (Hofbauer and Sigmund, 1998, p93; Gaunersdorfer and Hofbauer, 1995, p291). It may be shown that under the replicator dynamics
(i.e. $f$ strictly increasing and linear), any solution starting in the interior of the state space converges to the Nash equilibrium $\bp=(1/3, 1/3, 1/3, 0)$. In particular, strategy $4$ is eliminated. However, if the function $f$ is strictly increasing
but not convex, then one may find such payoffs that furthermore satisfy $f(a) > [f(b) + f(c)]/2$.
This means that in the RPS base game, even though the replicator dynamics is inward cycling,
the current dynamics is outward cycling close to the RPS cycle; thus, this cycle is asymptotically stable
on the face $x_4=0$. As detailed by Hofbauer and Weibull, this implies that for $\beta$ small enough,
the heteroclinic cycle between the $\bz_i$ is asymptotically stable. But along this cycle, $x_4>0$. Thus, 
though strictly dominated, strategy 4 survives for an open set of initial conditions.

\subsection{A dual result.}
\label{sec:dr}

Our aim is to give a dual example, showing that if $f$ is strictly increasing and sufficiently regular but not concave,
then mixed strategies that are strictly dominated by a pure strategy need not be eliminated.

To simplify the exposition, we assume that $f$ is differentiable, with a strictly positive derivative, and that $f$ is strictly convex. The strict convexity assumption may be replaced by any assumption that guarantees that $f$ is strictly convex over some range, as we can always locate all the payoffs in this range. For instance, $f$ $C^2$ and not concave suffices.

Consider the game with payoff matrix
 \begin{equation}
 \label{game:dual}
\bA = \left(
\begin{array}{cccc}
a  &  c & b  & m- \gamma \\
b  & a  & c  & m-\gamma \\
c  & b  & a & m-\gamma \\
m + \beta & m + \beta & m + \beta & m
\end{array}
\right)
\end{equation}
where $m=(a+b+c)/3$. Assume that : (i) $c < a < b$, (ii) $\beta > 0$ and $\gamma > 0$, and (iii)  $a > (b+c)/2$.
Condition (i) means that the $3 \times 3$ base game is a RPS game; condition (ii) implies that strategy $4$ strictly dominates the strategy $\bp=(1/3, 1/3, 1/3, 0)$;
condition (iii) implies that in the RPS base game, the replicator dynamics cycles outwards.

Because $f$ is strictly convex, we may find such payoffs that furthermore satisfy: (iv) $f(a) < [f(b) + f(c)]/2$.
Consider first the dynamics on the face $x_4=0$, that is, in the RPS base game. Condition (iv) implies that on this face,
even though the replicator dynamics is outward cycling, our dynamics is inward cycling close to the RPS heteroclinic cycle,
i.e. the relative boundary of this face is repulsive (Hofbauer and Sigmund, 1998; Gaunersdorfer and Hofbauer, 1995). Using a Taylor expansion of $f$ near $m$, it may be shown that, close to the rest point
$\bp=(1/3, 1/3, 1/3, 0)$, the dynamics behaves as if $f$ was linear, that is, as the replicator dynamics. Due to condition (3) this implies that close to $\bp$
(except exactly at $\bp$), the dynamics is outward cycling: $x_1x_2x_3$
strictly decreases along trajectories (Figure 1).
\footnote{Technically, fix a solution $\bx(\cdot)$ in the relative interior of the face $x_4=0$.
Let $v(t)=(x_1x_2x_3)^{1/3}$ and $w(t)=\ln v(t)$. Let $\bh(t)=\bx(t)-\bp$. Then $\dot{w}(t)=- [a-(b+c)/2] || \bh(t) ||^2 + o(|| \bh(t) ||^2)$,
hence near $\bp$, by condition (iii), $x_1x_2x_3$ decreases along trajectories.
To obtain this expression of $\dot{w}$, note that $(\bA\bx)_i - m=(\bA\bh)_i$, thus $f((\bA\bx)_i)=m + f'(m) (\bA\bh)_i  + o([\bA\bh)_i])$.
Using this and $\sum_{i} h_i=0$, we get $\dot{w}=  - f'(m) \bh \cdot \bA\bh + o( || \bh ||^2)$.
A standard computation shows that $\bh \cdot \bA\bh=[a-(b+c)/2] || \bh(t) ||^2$ hence the result.
This may be seen as a variant of exercise 8.1.1 in Hofbauer and Sigmund (1998).} 

By compactness, this implies that there exists $\rho \in ]0,1/27[$ and a positive
constant $C$ such that, if $x_4=0$ and $x_1x_2x_3=\rho$, then $d(x_1x_2x_3)/dt < -C$. By continuity of $f$,
this implies the existence of a small positive $\varepsilon$ such that, if $x_4 \leq \varepsilon$
and $x_1x_2x_3=\rho$, then $d(x_1x_2x_3)/dt < -C/2$. Furthermore, for $\beta=0$, except if $x_1=x_2=x_3$,
the payoff of strategy $4$ is less than the mean payoff: $(\bA\bx)_4 < \bx \cdot \bA\bx$ (Viossat, 2007), hence $\dot{x}_4<0$ by convexity of $f$.
By a compactness and continuity argument, this implies that for $\beta$ small enough,
if $\bx$ belongs to the compact set  $K=\{\bx \in S_4 : x_1x_2x_3 \leq \rho \mbox{ and } x_4  \leq \varepsilon$\},
then $\dot{x}_4< -C'$ for some positive constant $C'$.

Now consider any solution with initial condition in $K$.  The solution cannot leave $K$, which implies that it converges to the face $x_4=0$.
Moreover, since on this face the relative boundary is repulsive, this implies that $\liminf x_1x_2x_3 >0$. Thus, though $\bp=(1/3, 1/3, 1/3,0)$
is strictly dominated by strategy $4$, it is not eliminated.\footnote{This proof is similar to the proof that the Brown-von-Neumann-Nash dynamics
may eliminate all strategies used in correlated equilibria (Viossat, 2008)}

\begin{center}
\includegraphics[scale=0.5]{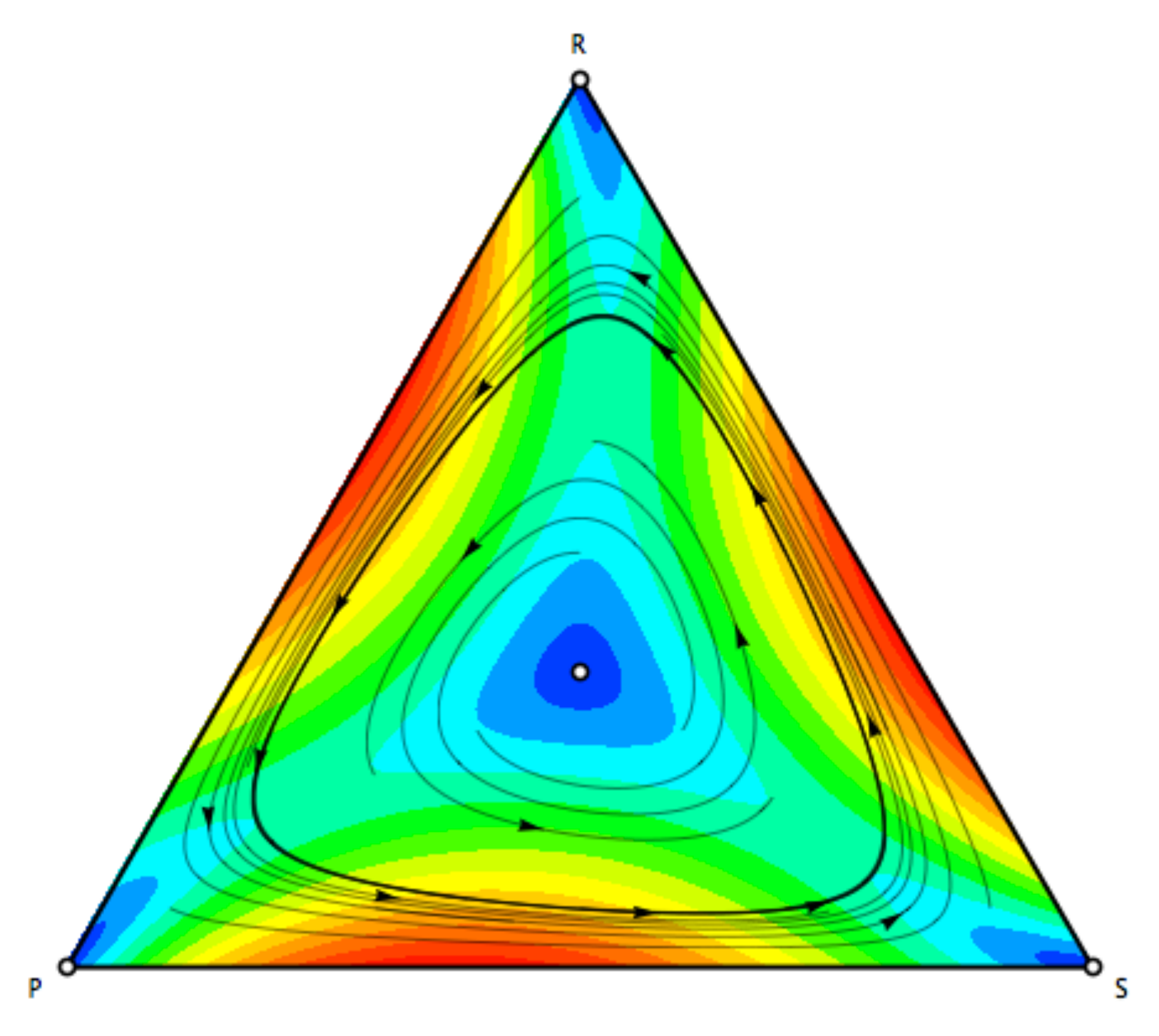}
\end{center}
\begin{small}
Figure 1: Dynamics on the face $x_4=0$. Both the center and the boundary are repulsive.  In the neighborhood of this face, for suitable payoffs, $x_4$ decreases outside a 
small neighborhood of $(1/3, 1/3, 1/3, 0)$. Thus the limit cycle (or annulus) in the interior of this face is asymptotically stable in the whole 
game. Graphic produced with Dynamo (Sandholm et al., 2011).
\end{small}

\subsection{Discrete-time analog}
For discrete-time dynamics (\ref{eq:discPF}), the survival examples of sections \ref{subsec:HW} and \ref{sec:dr} go through if the assumptions on the convexity of $f$ are replaced by the corresponding assumptions on $\ln f$. 
Indeed, for reasons similar to those of section \ref{sec:discr}, the stability condition of the RPS cycle of the base game for  (\ref{eq:discPF}) is : $\ln f(a)  > [\ln f(b) + \ln f(c)]/2$ (Gaunersdorfer, 1992, section 5 ; Hofbauer and Schlag, 2000, p533 ; see also Weissing, 1991). For Hofbauer and Weibull's result, the proof goes through with no other changes. For our dual result, two other ingredients are needed: first, $(\bA\bx)_4 < \bx \cdot \bA\bx$ should imply  $f[(\bA\bx)_4) < \sum_i x_i f[(\bA\bx)_i]$; but if $\ln f$ is convex, so is $f$, hence this holds. Second, in the RPS base game, $x_1x_2x_3$ should decrease near the equilibrium. In continuous time, when $a > [b+c]/2$, this holds for any $C^1$ monotonic dynamics; but the discrete-time dynamics trajectories are tangent to their continous-time counterpart and the set of mixed strategies $\bx$ such that $x_1x_2x_3 \geq \lambda$ is convex; thus, under the discrete-time dynamics, $x_1x_2x_3$ decreases a fortiori (see also Hofbauer and Schlag, 2000, p534). The rest of the proof is unchanged.

\section{Other dynamics}
\label{sec:nonmon}
\indent \emph{Continuous set of pure strategies} Elimination of dominated strategies by monotonic dynamics is generalized to games with a continuous set of pure strategies by Heifetz et al (2007a, 2007b) and by Cressman and Hofbauer (2005) and Cressman et al (2006) [see also, Cressman, 2005] for the replicator dynamics. 

\emph{Stochastic dynamics} Cabrales (2000) studies a stochastic version of the replicator dynamics and gives conditions under which dominated strategies are eliminated. Mertikopoulos and Moustakas (2010) show that for another version of the stochastic replicator dynamics, dominated strategies are always eliminated.

\emph{Best-reply dynamics. } Under the best-reply dynamics (Gilboa and Matsui, 1991; Matsui 1992),
the frequency of pure strategies that are not best replies to the current strategy of the opponent decreases exponentially.
It follows that strictly dominated pure strategies are eliminated. This extends to iteratively strictly dominated pure strategies.
However, strictly dominated mixed strategies need not be eliminated. This follows from (Viossat, 2008) or from a variant of the proof of proposition (\ref{prop:surv}). These results are not surprising, as 
the best reply dynamics may be seen as a limit case of convex monotonic dynamics (Hofbauer and Weibull, 1996)

\emph{Survival of pure strategies strictly dominated by other pure strategies.}
Building on Berger and Hofbauer (2006), Hofbauer and Sandholm (forthcoming)  [see also Sandholm (2011)] show that under a large class of dynamics, a pure strategy strictly dominated by another pure strategy may survive. Their key feature is that, except at Nash equilibria, the growth rate of an unused best-reply is always positive. Combined with regularity requirements,
this gives a small advantage to rare strategies which allows to maintain strictly dominated strategies at nonvanishing frequencies.\footnote{An example of a game and a 
dynamics such that a pure strategy dominated by {\em all} other pure strategies survives for most initial conditions is given by  Bj\"ornerstedt et al. (1996).}

\section{Discussion} \label{sec:disc}

One of the contribution of this article is to study elimination of dominated mixed strategies by concave monotonic dynamics.  Elimination of dominated mixed strategies has a natural interpretation if we assume that agents may play mixed strategies,
but even if we think that agents play only pure strategies, we find the concept useful on at least three grounds. 
First, to compare the outcome of evolutionary dynamics with rationalizable strategies (Bernheim, 1984; Pearce, 1984). 
Second, to clarify the logic of Hofbauer and Weibull's (1996) results by showing that elimination of pure strategies by 
convex monotonic dynamics and of mixed strategies by concave monotonic dynamics are two faces of the same coin. 
Third, to analyse the behaviour of some dynamics.

Consider for instance a single-population dynamics modeling the evolution of behavior in the two-player symmetric game
\[\left(\begin{array}{ccc}
3 & 0 & 0 \\
0 & 3 & 0 \\
2 & 2 & 1
\end{array}\right)\]
The mixed strategy $\bq=(1/2, 1/2, 0)$ is strictly dominated by the third pure strategy. Thus, if the dynamics is concave monotonic,
then for any interior initial condition $x_1(t)x_2(t) \to 0$, hence the solution converges to the union of the edges $x_1=0$ and $x_2=0$.
Together with the dynamics on these edges (which is qualitatively the same for all monotonic dynamics), this implies that any interior solution converges to one of the three pure Nash equilibria, or to one of the rest points in the relative interior of these edges, that is $(1/2, 0, 1/2)$ and  $(0, 1/2, 1/2)$. This need not be the only way to prove this result, but probably the most elementary.

\begin{appendix}

\section{Iterative elimination}
\label{sec:itel}
Proposition (\ref{prop:elim}) extends to elimination of iteratively strictly dominated strategies. For simplicity, we focus on two-player finite games. The extension to $n$-player games is immediate.

Let $I$ and $J$ denote the finite sets of pure strategies of player $1$ and $2$, respectively, and for any finite set $K$, let $\Delta(K)$ denote the simplex of probability distribution over $K$.
Thus, the set of mixed strategies of player $1$ is $\Delta(I)$.

Let $I^0=\tilde{I}^0=I$ and $S^0=\tilde{S}^0=\Delta(I)$. Similarly, let $J^0=\tilde{J}^0=J$ and $T^0=\tilde{T}^0=\Delta(J)$. Inductively, let $I^{k+1}$ (resp. $\tilde{I}^{k+1}$, $S^{k+1}$, $\tilde{S}^{k+1}$) denote the set of strategies
in $I^k$ (resp. $\tilde{I}^{k}$, $S^{k}$, $\tilde{S}^k$)
that are not strictly dominated by any strategy in $I^k$ (resp. $\Delta(\tilde{I}^{k})$, $I \cap S^{k}$, $\tilde{S}^k$)
when player 2 chooses strategies in
$J^k$ (resp. $\tilde{J}^k$, $T^k$, $\tilde{T}^k$). Similar definitions apply to player 2.\\

\noindent {\em Definition.} A pure strategy $i \in I$ (resp. a mixed strategy $\bx \in \Delta(I)$)
is \emph{iteratively strictly dominated by pure strategies}
if there exists $k$ in $\N$ such that $i \notin I^{k}$ (resp. $\bx \notin S^k$).
A pure strategy $i \in I$ (resp. a mixed strategy $\bx \in \Delta(I)$) is \emph{iteratively strictly dominated}
if there exists $k$ in $\N$ such that $i \notin \tilde{I}_k$ (resp. $\bx \notin \tilde{S}^{k}$). \footnote{ \label{ft:iter} Since a pure strategy is also a mixed strategy,
there seems to be two definitions of iteratively strictly dominated pure strategies,
but they are equivalent. This is because iterative elimination of dominated
mixed strategies boils down to iterative elimination of dominated pure strategies
followed by one round of elimination of dominated mixed strategies, as is easily shown.
Similarly, iterative elimination of mixed strategies dominated by pure strategies boils down to 
iterative elimination of pure strategies dominated by other pure strategies, followed by one round of elimination of mixed strategies dominated by pure ones. Thus, both definitions of pure strategies iteratively strictly dominated by pure strategies are also equivalent.}

\begin{proposition} For every interior initial condition: if both $\bx(t)$ and $\by(t)$ follow a monotonic (resp. concave monotonic) dynamics, then every pure (resp. mixed) strategy iteratively strictly
dominated by pure strategies is eliminated; if both $\bx(t)$ and $\by(t)$ follow a convex  (resp. aggregate) monotonic dynamics, then every iteratively strictly
dominated pure (resp. mixed) strategy is eliminated.
\end{proposition}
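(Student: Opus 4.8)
The plan is to prove the statement by induction on the elimination round $k$, using Proposition~\ref{prop:elim} as the base case and a ``carry-over'' argument to pass from one round to the next. I will treat the four cases (monotonic, concave monotonic, convex monotonic, aggregate monotonic) in parallel, since the structure is identical; the only thing that changes is which of $\bp$, $\bq$ is required to be pure at each invocation of Proposition~\ref{prop:elim}. To keep notation light, I will focus on player~$1$; the symmetric statements for player~$2$ follow by the same reasoning, and it is precisely the fact that \emph{both} $\bx(t)$ and $\by(t)$ follow dynamics of the relevant class that lets the induction proceed on both sides simultaneously.

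First, for the base case $k=1$: by definition, a strategy eliminated at round~$1$ is strictly dominated (by a pure strategy, in the monotonic/concave cases; by an arbitrary mixed strategy in the convex/aggregate cases) when the opponent is unrestricted, i.e. plays in $T^0=\Delta(J)$, which contains $S_{opp}$ restricted to player~2's simplex. Proposition~\ref{prop:elim} then gives elimination directly. For the inductive step, suppose every strategy of player~$1$ (and player~$2$) removed by round~$k$ has been shown to be eliminated along the fixed interior solution. Let $i$ (or a mixed $\bx$) be removed at round $k+1$: there is a dominating strategy $\bp$ such that $U_{\bp}(\by) > U_i(\by)$ for all $\by \in \Delta(J^k)$ — but \emph{not} necessarily for all $\by \in \Delta(J)$. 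The key step is to upgrade this conditional domination into a genuine growth-rate inequality that holds asymptotically along the solution. Since the already-eliminated pure strategies of player~2 have frequency tending to $0$, the state $\by(t)$ converges to the face $\Delta(J^k)$; by compactness of $\Delta(J^k)\times S_N$ and strict inequality on that face, there is $\varepsilon>0$ and a time $T$ such that $g_{\bp}(\bx(t),\by(t)) - g_i(\bx(t),\by(t)) \geq \varepsilon$ for all $t \geq T$, where I have used the appropriate monotonicity class to convert the payoff inequality (valid on a neighbourhood of the face, by continuity of the $U$'s) into a growth-rate inequality. Then the argument of Proposition~\ref{prop:elim} applies verbatim to $w(t) = \sum_{i} (p_i - q_i)\ln x_i(t)$ from time $T$ on, yielding elimination.

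There is one genuine subtlety I would flag as the main obstacle: the definition of the iterated sets allows the \emph{dominating} strategy $\bp$ to range over $\Delta(\tilde I^k)$ or $I \cap S^k$ etc., so one must check that passing to such a restricted support does not cost anything. It does not, because restricting the support of $\bp$ only makes $g_{\bp}$ a convex combination over fewer indices, and the growth-rate inequality is still exactly what is needed; moreover, as observed in footnote~\ref{ft:iter}, the various a~priori-distinct notions of ``iteratively dominated'' coincide, so it suffices to verify the claim for one convenient bookkeeping scheme and invoke that equivalence. A second, more technical point: the neighbourhood on which the payoff inequality $U_{\bp} > U_i$ extends off the face $\Delta(J^k)$ depends on $\varepsilon$ and on the (finite) list of strategies being eliminated, and one should fix the quantifiers in the right order — first the solution, then $k$, then the finitely many strategies removed at stage $k+1$ — so that only finitely many compactness arguments are chained. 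Once these points are handled, the induction closes and every iteratively (strictly, by pure, as appropriate) dominated strategy of either player is eliminated along every interior solution.

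\begin{proof}
The proof is by induction on the round $k$ at which a strategy is removed, applying Proposition~\ref{prop:elim} at each round. We argue for player~$1$; the argument for player~$2$ is identical, and since both populations follow a dynamics of the relevant class, the two inductions run in parallel. In view of footnote~\ref{ft:iter} it suffices to establish the claim for any one of the equivalent bookkeeping schemes; we use the sets $I^k$, $S^k$, $\tilde I^k$, $\tilde S^k$ as defined above.

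Fix an interior solution $(\bx(\cdot),\by(\cdot))$. For $k=1$, a strategy removed at round~$1$ is strictly dominated (by a pure strategy in the monotonic and concave monotonic cases, by a mixed strategy in the convex monotonic and aggregate monotonic cases) against all opponent strategies, hence is eliminated by Proposition~\ref{prop:elim}. Assume that for some $k \geq 1$, every strategy of either player removed by round~$k$ is eliminated along this solution. Then $\min_{j \notin J^k} y_j(t) \to 0$, so the distance from $\by(t)$ to $\Delta(J^k)$ tends to $0$. Let $i$ (resp. a mixed $\bx_0$) be removed at round $k+1$, with dominating strategy $\bp$ of the prescribed type: $U_{\bp}(\by) > U_i(\by)$ (resp. $> U_{\bx_0}(\by)$) for all $\by \in \Delta(J^k)$. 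By continuity of the payoffs and compactness, there is a neighbourhood of $\Delta(J^k)$ on which this strict inequality persists, and there is $T$ such that $\by(t)$ lies in this neighbourhood for $t \geq T$; by the relevant monotonicity and compactness of $S_N$ together with the persisting neighbourhood, there is $\varepsilon > 0$ with $g_{\bp}(\bx(t),\by(t)) - g_i(\bx(t),\by(t)) \geq \varepsilon$ (resp. with $g_{\bx_0}$ in place of $g_i$) for all $t \geq T$. Applying the computation of Proposition~\ref{prop:elim} to $w(t) = \sum_{\ell \in I}(p_\ell - q_\ell)\ln x_\ell(t)$ on $[T,+\infty)$, where $\bq = i$ (resp. $\bq = \bx_0$), gives $w(t) \to +\infty$, hence $i$ (resp. $\bx_0$) is eliminated. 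Since at round $k+1$ only finitely many strategies are removed, finitely many such arguments suffice, and the induction closes.
\end{proof}
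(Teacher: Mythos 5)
Your proof is correct and takes essentially the same route as the paper's: induction on the elimination round, with Proposition~\ref{prop:elim} as the base case, the induction hypothesis forcing $\by(t)$ toward the face spanned by surviving strategies, a compactness/continuity argument extending the domination inequality to a neighbourhood of that face so that the growth-rate gap is uniformly positive for large $t$, and the $w(t)$ computation finishing the step. Your reduction of the concave and aggregate cases via footnote~\ref{ft:iter} is also exactly how the paper handles them.
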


\begin{proof}
We first prove the result on convex monotonic dynamics. It follows from proposition \ref{prop:elim} that the pure strategies in $\tilde{I}^{0}\backslash \tilde{I}^{1}$
and in $\tilde{J}^{0}\backslash \tilde{J}^{1}$ are eliminated. By induction assume that for $k$ in $\N$, the strategies in
$\tilde{I}^{0}\backslash \tilde{I}^{k}$ and in $\tilde{J}^{0}\backslash \tilde{J}^{k}$ are eliminated, hence in particular
\begin{equation}
\label{eq:iterdom} \forall \eta>0, \exists T \in \R, \forall t
\geq T, \max_{j \in J \backslash \tilde{J}^k} y_j(t) \leq \eta
\end{equation}
Let $i \in \tilde{I}^{k}\backslash \tilde{I}^{k+1}$. Since strategy $i$ is
strictly dominated in the game restricted to $\tilde{I}^k \times \tilde{J}^k$,
there exists $\varepsilon>0$, $\eta>0$ and $\bp \in S_N$ such that
$$\max_{j \in J \backslash \tilde{J}^k} y_j \leq \eta \Rightarrow
U_{i}(\by(t)) < U_{\bp}(\by(t)) + \varepsilon$$
Therefore, it follows from (\ref{eq:iterdom}) and the proof of proposition \ref{prop:elim} that strategy $i$ is eliminated. The result follows.

The proof for monotonic dynamics is identical up to replacement of $\tilde{I}^k$, $\tilde{J}^k$ by $I^k$, $J^k$, and of ``$p \in S_N$" by ``$i \in I$". Concave monotonic dynamics eliminate pure strategies iteratively strictly dominated by pure strategies, as any monotonic dynamics, and then mixed strategies that become dominated by pure ones after this elimination, by an argument similar to the iteration step for convex monotonic dynamics. This and footnote \ref{ft:iter} proves the result. The proof for aggregate monotonic dynamics relies similarly on the fact that they are convex monotonic and footnote \ref{ft:iter}.  
\end{proof}

These results are due to Nachbar (1990) for monotonic dynamics,  Samuelson and Zhang (1992) for aggregate monotonic dynamics and Hofbauer and Weibull (1996) for convex monotonic dynamics. 

\section{Details on survival of dominated mixed strategies under non concave monotonic dynamics}
\label{app:elim}
\subsection{For payoff functional dynamics (\ref{eq:2popPF})}
\label{app:elim1}
If $f$ is not concave, then there exist reals $a$ and $b$ such
that
$$\frac{f(a) + f(b)}{2} >  f\left(\frac{a+b}{2}\right)$$

By continuity of $f$, there exists $\varepsilon>0$ such that:
\begin{equation} \label{eq:nonconcave2}
\alpha:= \frac{f(a) + f(b)}{2} - f\left(\frac{a+b}{2} + \varepsilon \right) >0%
\end{equation}%
Assume that player $1$ plays a $3 \times 2$ game with payoff
matrix:
$$\begin{array}{cc}
  & \begin{array}{cc}
    L \hspace{0.5 cm} & \hspace{0.5 cm} R \\
    \end{array} \\
\begin{array}{c}
 1 \\
 2 \\
 3 \\
\end{array}
& \left(\begin{array}{cc}
           a            &           b            \\
 \frac{a+b}{2} +\varepsilon & \frac{a+b}{2}+\varepsilon \\
           b            &           a            \\
\end{array}\right)\\
\end{array}
$$
Let $y_l(t)$ denote the probability at time $t$ that the opponent
chooses the left column. Fix some large positive constant $T$
(more precisely, $T>(2C+1)/\alpha + 1$ with $C=\max_{[a,b]} |f|$;
this condition will appear later). Assume that the function
$y_l(\cdot)$ is $2T$-periodic with $y_l(t)=1$ if $t \in [0,T-1]$,
$y_l(t)=0$ if $t \in [T, 2T-1]$, and linear variation on $[T-1,T]$
and $[2T-1,2T]$.

Assume that $\bx(0) \in \mbox{int} S_N$ and let
$$w(t):=\ln\left(\frac{x_2}{\sqrt{x_1x_3}}\right)(t)$$
Letting $g_i(t):=f(U_{i}(\by(t))$, we have:
$$\dot{w}(t)=g_2(t) - \frac{g_1(t)+g_3(t)}{2}$$

hence
$$w((k+1)T)-w(kT) =\int_{kT}^{(k+1)T} \left(g_2(t)-\frac{g_1(t)+
g_3(t)}{2}\right)\, dt$$

Since for all $t$, $g_2(t)=f((a+b)/2 +\varepsilon)$ and since
$$\forall t \in [kT,(k+1)T-1], \, g_1(t)+ g_3(t)=f(a) + f(b)$$

it follows from (\ref{eq:nonconcave2}) that:
$$w((k+1)T)-w(kT) \leq -(T-1)\alpha + 2C$$
with $C=\max_{[a,b]} |f|$. Since we assumed $T>(2C+1)/\alpha + 1$, it follows that
$$w((k+1)T) \leq w(kT) -1$$
Therefore $w(kT) \to - \infty$ as $k \to +\infty$. Since the variation of $w$ between $kT$ and $(k+1)T$ is
bounded (less than $2CT$), it follows that $w(t) \to - \infty$ hence $x_2(t) \to 0$ as $t \to +\infty$.
Furthermore, it is easy to see by the same kind of computation that $x_1(t)/x_3(t)$ is $2T$-periodic. It
follows that $x_1x_3 \nrightarrow 0$. Actually, as is easily seen,
$$\inf_{t \in \R_+} x_1(t)x_3(t)=\min_{t \in [0,2T]} x_1(t)x_3(t)
>0$$
Therefore, though $\bq=(1/2, 0, 1/2)$ is strictly dominated by strategy $2$, the mixed strategy $\bq$ is not
eliminated.\footnote{Note also that for $T$ sufficiently large or $x_1(0)$ sufficiently close to $x_3(0)$,
there exists $t$ in $[0,2T]$ such that $x_1(t)=x_3(t)$, hence $x_1(t +2kT)=x_3(t+2kT)$ for all $k \in \N$.
Together with $x_2(t) \to 0$, this implies that $\limsup_{t \to +\infty} x_1(t)x_3(t)=1/4$.}

\subsection{For the wider class of payoff functional dynamics considered by Hofbauer and Weibull (1996)}
\label{app:elim2}

In the context of $n$-player normal form games (hence with the opponent corresponding to players $2$ to $n$), Hofbauer and Weibull (1996) consider payoff functional dynamics of the form (time indices suppressed):
\begin{equation}
\label{eq:HWPF} %
\dot{x}_{i}=\lambda(\bx, \by) x_{i} \left[f(U_i(\by))-\bar{f}\right]
\end{equation}%
This is more general than (\ref{eq:2popPF}) because the speed of the dynamics may now depend on the population's and opponent's mean strategies. Assuming that $\lambda$ is continuous, the previous proof is easily adapted to deal with these more general dynamics. 
 
Fix a large constant $T$. Let $(t_n)$ be a strictly increasing sequences of times, with $t_0=0$. Assume that :

(i) $y(t)$ is piecewise linear, with for all $n \in \N$ : $y_L(t)=1$ on $[t_{6n}, t_{6n+1}]$, $y_L(t)=0$ on $[t_{6n+3}, t_{6n+4}]$, $y_L(t_{6n+2})=y_L(t_{6n+5})=1/2$, and linear variation 
on $[t_{6n+1}, t_{6n+2}]$, $[t_{6n+2}, t_{6n+3}]$, $[t_{6n+4}, t_{6n+5}]$ and $[t_{6n+5}, t_{6n+6}]$.
Furthermore :

(ii) Choose $t_{6n+1}$ and $t_{6n+4}$ such that  
$$\int_{t_{6n}}^{t_{6n+1}} \lambda(x(s),y(s)) ds = \int_{t_{6n+3}}^{t_{6n+4}} \lambda(x(s),y(s)) ds=T ; $$ 

(iii) Let $t_{6n+3} = t_{6n+1}+1$ and $t_{6n+6} = t_{6n+4} + 1$;
 
 (iv) if $a=b$, choose $t_{6n+2}$ and $t_{6n+5}$ arbitrarily in $]t_{6n+1}, t_{6n+3}[$ 
 and $]t_{6n+4}, t_{6n+6}[$, respectively.  Otherwise, assume w.l.o.g. $a < b$, 
 note that $g_1- g_3$ is negative on $]t_{6n+1}, t_{6n+2}[$ and positive on  $]t_{6n+2}, t_{6n+3}[$, 
 and choose $t_{6n+2}$ such that $\int_{t_{6n+1}}^{t_{6n+3}} [g_1(\bx(s), \by(s)) - g_3(\bx(s), \by(s))] ds =0$. 
 Similarly, choose $t_{6n+5}$ such that $$\int_{t_{6n+4}}^{t_{6n+6}} [g_1(\bx(s), \by(s)) - g_3(\bx(s), \by(s))] ds =0 .$$

The same kind of computation as in the case $\lambda \equiv 1$ show that 
$$w(t_{6n+6}-w(t_{6n}) < -2T \alpha +  4 v_{max} C$$ 
where $v_{max}= \max_{(\bx, \by) \in S_N \times S_{opp} } \lambda(\bx, \by)$, 
and $C=\max_{[a,b]} |f|$. Thus for $T > 2v_{max} C / \alpha$, $w(t) \to_{t \to +\infty} - \infty$. 
Thus $x_2(t) \to 0$ and $x_1(t) + x_3(t) \to 1$. Conditions (ii) and (iv) ensure that 
$[x_1/x_3] (t_{6n+6})= [x_1/ x_3](t_{6n})$. Together with  $x_1(t) + x_3(t) \to 1$, this implies that 
$\limsup x_1(t)x_3(t) \geq x_1(0)x_3(0) >0$, hence $\bq$ is not eliminated.   	

%
%

\section{More on discrete dynamics}
\label{app:disc}

We now consider the limit $C \to +\infty$. If $C \gg
\max_{(i,\bx,\by)} |g_i(\bx,\by)|$ then $\ln\left(1+g_i/C\right)$
is approximately equal to $g_i/C$. From this remark we obtain the following result, which was shown by Cabrales and Sobel (1992), in the aggregate monotonic case.
\begin{proposition}\label{prop:Clarge}
Fix a game and functions $g_i$; fix a mixed strategy $\bq$
strictly dominated by a mixed strategy $\bp$. Assume either that the dynamics is aggregate monotonic (in the continuous-time sense) or that it is convex monotonic and that $\bq$ is pure. Then there
exists $\bar{C}$ in $\R$ such that for all $C\geq \bar{C}$, the
discrete dynamics (\ref{eq:discrete-gen}) eliminates strategy $\bq$.
\end{proposition}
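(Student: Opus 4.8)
The plan is to transpose the continuous-time argument of Proposition \ref{prop:elim} to the discrete setting, using the increment formula (\ref{eq:discrete-dotw}) in place of $\dot w$, and to show that for $C$ large enough this increment is uniformly positive. Set $w(n):=\sum_{i\in I}(p_i-q_i)\ln x_i(n)$, so that by (\ref{eq:discrete-dotw}),
\[
w(n+1)-w(n)=\sum_{i\in I}(p_i-q_i)\,\ln\!\bigl(C+g_i(\bx(n),\by(n))\bigr).
\]
Since $\sum_{i\in I}(p_i-q_i)=0$, I would write $\ln(C+g_i)=\ln C+\ln(1+g_i/C)$ and drop the $\ln C$ contribution, leaving $w(n+1)-w(n)=\sum_{i\in I}(p_i-q_i)\ln(1+g_i/C)$.

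Next I would expand the logarithm. Because $S_N\times S_{opp}$ is compact and each $g_i$ is continuous, there is a constant $G$ with $|g_i(\bx,\by)|\le G$ for all $i$ and all $(\bx,\by)\in S_N\times S_{opp}$, and there is $\kappa$ with $|\ln(1+u)-u|\le\kappa u^2$ for $|u|\le 1/2$. Hence for every $C\ge 2G$ (which in particular keeps $C+g_i>0$),
\[
w(n+1)-w(n)=\frac1C\bigl(g_{\bp}(\bx(n),\by(n))-g_{\bq}(\bx(n),\by(n))\bigr)+E_n,\qquad |E_n|\le\frac{K}{C^2},
\]
where $K$ depends only on $G$, $\kappa$ and $\sum_{i\in I}|p_i-q_i|\le 2$. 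Under either hypothesis of the proposition — aggregate monotonic, or convex monotonic with $\bq$ pure — the proof of Proposition \ref{prop:elim} supplies $\varepsilon>0$ such that $g_{\bp}(\bx,\by)-g_{\bq}(\bx,\by)\ge\varepsilon$ on $S_N\times S_{opp}$. Therefore $w(n+1)-w(n)\ge\varepsilon/C-K/C^2$, which is $\ge\varepsilon/(2C)>0$ as soon as $C\ge 2K/\varepsilon$. Setting $\bar C:=\max(2G,\,2K/\varepsilon)$, for every $C\ge\bar C$ the increments are bounded below by $\varepsilon/(2C)>0$, so $w(n)\to+\infty$.

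Finally, exactly as at the close of the proof of Proposition \ref{prop:elim}: since $x_i\le 1$ we have $\sum_{i\in I}p_i\ln x_i(n)\le 0$, so $w(n)\to+\infty$ forces $\sum_{i\in I}q_i\ln x_i(n)\to-\infty$, i.e. $\prod_{i\in I}x_i(n)^{q_i}\to 0$, which is precisely the elimination of $\bq$. The one point deserving care — and it is mild — is that the remainder $E_n$ be $O(1/C^2)$ \emph{uniformly in $n$}; this is guaranteed by the compactness of $S_N\times S_{opp}$ and continuity of the $g_i$, since the state $(\bx(n),\by(n))$ always lies in that set. No real obstacle arises: the proposition is essentially the observation that $\ln(1+g_i/C)\approx g_i/C$, so a fine enough discretization inherits the relevant monotonicity of the continuous dynamics up to a perturbation that vanishes as $C\to+\infty$.
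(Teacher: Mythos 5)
Your proof is correct and follows essentially the same route as the paper's: both decompose $\ln(C+g_i)=\ln C+\ln(1+g_i/C)$, use $\sum_i(p_i-q_i)=0$ to kill the $\ln C$ term, bound the error of the linear approximation $\ln(1+g_i/C)\approx g_i/C$ uniformly over the compact state space so that the increment of $w$ is at least a positive constant times $1/C$, and then conclude elimination exactly as in Proposition \ref{prop:elim}. The only difference is cosmetic — you make the $O(1/C^2)$ error bound explicit where the paper simply asserts $|\ln(1+g_i/C)-g_i/C|<\alpha/(4C)$ for $C$ large — so no further comment is needed.
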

\begin{proof}
There exists $\varepsilon>0$ such that
$$\forall (\bx,\by), \sum_{i \in I} (p_i-q_i) U_{i}(\by) \geq
\varepsilon$$
Therefore, under the above assumptions, there exists $\alpha>0$
such that
$$\forall (\bx,\by), \sum_{i \in I} (p_i-q_i) g_i(\bx,\by) \geq
\alpha$$
For $C$ large enough,
$$\forall (i,\bx,\by),  \left|\ln\left(1 +
\frac{g_i(\bx,\by)}{C}\right)-\frac{g_i(\bx,\by)}{C}\right| <
\frac{\alpha}{4C}$$
so that
$$\forall (\bx,\by), \sum_{i \in I} (p_i-q_i) \tilde{g}_i(\bx,\by)
\geq \alpha/2C>0$$
where $\tilde{g}_i=\ln C + \ln\left(1 + \frac{g_i}{C}\right)= \ln\left(C + g_i\right)$. The result
follows.
\end{proof}

Note that the constant $\bar{C}$ depends not only on the game and
the functions $g_i$, but also, through $\varepsilon$, on the
strategies $\bp$ and $\bq$. Compare proposition
\ref{prop:Clarge} and proposition
\ref{prop:discrete-conv-neg}. Note that for aggregate
monotonic dynamics, the order of the quantifiers (whether we first
fix the game, the functions $g_i$ and the strategies $\bp$ and
$\bq$, or we first fix the constant $C$) is crucial.

To conclude, note that if the constant $C$ depends on the step
$n$:
\begin{equation}
\label{eq:discrete-gen-Cn} x_i(n+1)=x_i(n) \frac{C_n+
g_i(\bx,\by)}{C_n + \sum_{k} x_k g_k(\bx,\by)}
\end{equation}
then we have:
\begin{proposition}
Assume that $C_n \to + \infty$ as $n\to + \infty$ and that
\begin{equation} \label{eq:discrete-alltheway}
\sum_{n \in \N} \frac{1}{C_n} = + \infty,
\end{equation}
Then for any game, any functions $g_i$ and any mixed strategies
$\bp$ and $\bq$ such that $\bp$ strictly dominates $\bq$, if the dynamics is aggregate monotonic or if it is convex monotonic and $\bq$ is pure,  then the discrete dynamics
(\ref{eq:discrete-gen-Cn}) eliminates strategy $\bq$.
\end{proposition}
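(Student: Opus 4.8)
The plan is to mimic the proof of Proposition~\ref{prop:Clarge}, but to track the cumulative drift of $w(n):=\sum_{i\in I}(q_i-p_i)\ln x_i(n)$ over all steps rather than relying on a uniform per-step lower bound. As before, since $\bp$ strictly dominates $\bq$ and the dynamics is aggregate monotonic (or convex monotonic with $\bq$ pure), compactness of $S_N\times S_{opp}$ gives a constant $\alpha>0$ with $\sum_{i\in I}(p_i-q_i)g_i(\bx,\by)\geq\alpha$ for all $(\bx,\by)$. Also set $M:=\max_{(i,\bx,\by)}|g_i(\bx,\by)|<\infty$. From (\ref{eq:discrete-dotw}) applied with step-dependent constant,
\[
w(n+1)-w(n)=-\sum_{i\in I}(p_i-q_i)\ln\!\Bigl(C_n+g_i(\bx,\by)\Bigr)\cdot(-1)=\sum_{i\in I}(p_i-q_i)\ln\!\Bigl(1+\tfrac{g_i(\bx,\by)}{C_n}\Bigr),
\]
using $\sum_i(p_i-q_i)\ln C_n=0$. (I would double-check the sign convention against the paper's $w$; the inequality below is what matters.)

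Next I would estimate each term. Since $C_n\to+\infty$, there is $n_0$ such that $C_n\geq 2M$ for $n\geq n_0$, and then for such $n$ and all $(i,\bx,\by)$ one has $|g_i/C_n|\leq 1/2$, so by the elementary bound $|\ln(1+u)-u|\leq u^2$ for $|u|\leq 1/2$,
\[
\Bigl|\ln\!\bigl(1+\tfrac{g_i}{C_n}\bigr)-\tfrac{g_i}{C_n}\Bigr|\leq\frac{M^2}{C_n^2}.
\]
Summing the defining linear combination, for $n\geq n_0$,
\[
w(n+1)-w(n)\;\geq\;\frac{1}{C_n}\sum_{i\in I}(p_i-q_i)g_i(\bx,\by)\;-\;\frac{2M^2}{C_n^2}\;\geq\;\frac{\alpha}{C_n}-\frac{2M^2}{C_n^2}\;=\;\frac{1}{C_n}\Bigl(\alpha-\frac{2M^2}{C_n}\Bigr).
\]
Enlarging $n_0$ if necessary so that $2M^2/C_n\leq\alpha/2$ for $n\geq n_0$, we get $w(n+1)-w(n)\geq\alpha/(2C_n)$ for all $n\geq n_0$. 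Telescoping from $n_0$ and invoking the divergence hypothesis (\ref{eq:discrete-alltheway}) — note $\sum_{n\geq n_0}1/C_n=+\infty$ as well — yields $w(n)\to+\infty$. Finally, exactly as in the proof of Proposition~\ref{prop:elim}, $\sum_i p_i\ln x_i(n)\leq 0$ forces $\sum_i q_i\ln x_i(n)\to-\infty$, i.e. $\prod_i x_i(n)^{q_i}\to 0$, so $\bq$ is eliminated.

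The only mildly delicate point is the interplay of the two hypotheses $C_n\to+\infty$ and $\sum 1/C_n=+\infty$: the first is needed to make the second-order error $o(1/C_n)$ uniformly dominated by the first-order drift, while the second guarantees that the (now shrinking) per-step gains still accumulate to infinity. Everything else is a routine repetition of the continuous-time argument with the replacement $g_i\rightsquigarrow\tilde g_i=\ln(C_n+g_i)$ already explained in Section~\ref{sec:discr}, so I do not expect a real obstacle — the main thing to be careful about is keeping track of signs and making the threshold $n_0$ depend only on the game, the $g_i$, and $\bp,\bq$ (through $\alpha$), not on the initial condition.
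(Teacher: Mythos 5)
Your argument is correct and is exactly the approach the paper intends: its own proof of this proposition is a one-line pointer to Proposition~\ref{prop:Clarge} plus the remark that (\ref{eq:discrete-alltheway}) prevents the dynamics from ``stopping'', and you have simply carried out that adaptation in full, replacing the uniform per-step bound by the summable-drift estimate $w(n+1)-w(n)\geq\alpha/(2C_n)$ and telescoping. The only blemish is the self-acknowledged sign slip in the definition of $w$ (you should take $w=\sum_i(p_i-q_i)\ln x_i$ as in the proof of Proposition~\ref{prop:elim}, so that $w\to+\infty$ together with $\sum_i p_i\ln x_i\leq 0$ yields $\sum_i q_i\ln x_i\to-\infty$); with that convention fixed, everything goes through.
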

\begin{proof}
This follows from proposition \ref{prop:Clarge}. The condition
(\ref{eq:discrete-alltheway}) is needed for the dynamics not
to slow down too much and ``stop".
\end{proof}
\end{appendix}

\end{document}